\def\A{{\mathcal A}}
\def\Th{\Theta}
\def\N{\mathbb{N}}
\def\X{\mathscr X}
\def\AA{\mathfrak A}
\def\R{\mathcal{R}}
\def\S{\mathcal S}
\def\Ai{\mathcal A^\infty}
\def\AAi{\mathfrak A^\infty}
\def\CC{{\mathfrak C}}
\def\RR{{\mathfrak R}}
\def\I{{\rm 1\kern-.26em I}}
\def\si{\sigma}
\def\Si{\Sigma}
\def\1{\mathfrak{1}}
\def\0{\mathfrak{0}}
\def\<{\langle}
\def\>{\rangle}
\providecommand{\CC}{\mathfrak{C}}
\def\A{{\mathcal A}}
\def\AA{{\mathfrak A}}
\def\Ai{{\mathcal A^\infty}}
\def\AAi{{\mathfrak A^\infty}}
\def\Th{\Theta}
\def\N{\mathbb{N}}
\def\X{\mathscr X}
\def\[{\left[}
\def\]{\right]}
\def\<{\left<}
\def\>{\right>}
\def\({\left(}
\def\){\right)}
\def\R{\mathcal{R}}
\def\S{\mathscr S}
\def\CC{{\mathfrak C}}
\def\RR{{\mathfrak R}}
\def\I{{\rm 1\kern-.26em I}}
\def\si{\sigma}
\def\Si{\Sigma}
\def\1{\mathfrak{1}}
\def\0{\mathfrak{0}}
\def\<{\langle}
\def\>{\rangle}
\newcommand{\bb}{\mathbb}
\newcommand{\scalar}[2]{\langle #1, #2 \rangle}
\newcommand{\norm}[1]{\Vert #1 \Vert }
\newcommand{\normrum}[2]{{\norm {#1}}_{#2}}
\newcommand{\opn}{\operatorname}
\newcommand{\Cal}{\mathscr}
\newcommand{\gk}{\mathfrak}
\providecommand{\CC}{\mathfrak{C}}
\providecommand{\ie}{i.~e.}
\newtheorem{Theorem}{Theorem}[section]
\newtheorem{Remark}[Theorem]{Remark}
\newtheorem{Lemma}[Theorem]{Lemma}
\newtheorem{Corollary}[Theorem]{Corollary}
\newtheorem{Proposition}[Theorem]{Proposition}
\newtheorem{Definition}[Theorem]{Definition}
\numberwithin{equation}{section}
\begin{document}

\title{Rieffel deformation and twisted crossed products}

\author{I. Belti\c t\u a$\,^{*}$ and  M. M\u antoiu$\,^{**}$}

\maketitle

\footnote*{
\textbf{2010 Mathematics Subject Classification: Primary 35S05,
81Q10, Secundary 46L55, 47C15.}
\newline
\textbf{Key Words:}  Pseudodifferential operator, Rieffel deformation, $C^*$-algebra, crossed product, K-theory,
noncommutative dynamical system.

\begin{itemize}
\item[$^*$] Institute
of Mathematics Simion Stoilow of the Romanian Academy, P.O.  Box
1-764, Bucharest, \\ RO-70700, Romania,
Email: {\tt ingrid.beltita@imar.ro}
\item[$^**$] Departamento de Matem\'aticas, Universidad de Chile, Las Palmeras 3425, Casilla 653,
Santiago, Chile, \\
Email: {\tt mantoiu@uchile.cl}
\end{itemize}}

\date{\small}
\maketitle

\begin{abstract}
To a continuous action of a vector group on a $C^*$-algebra,
twisted by the imaginary exponential of a symplectic form, one
associates a Rieffel deformed algebra as well as a twisted crossed
product. We show that the second one is isomorphic to the tensor
product of the first one with the $C^*$-algebra of compact
operators in a separable Hilbert space and we indicate some
applications.
\end{abstract}

\maketitle

\section{Introduction}\label{duci}

In order to provide a unified framework for a large class of examples in deformation quantization, Marc Rieffel \cite{Rie1}
significantly extended the basic part of the Weyl pseudodifferential calculus. Rieffel's calculus starts from the
action $\Th$ of a finite-dimensional vector space $\Xi$ on a $C^*$-algebra $\A$, together with a skew-symmetric  linear
operator $J:\Xi\rightarrow\Xi$ that serves to twist the product on $\A$. Using $J$ one defines first a new composition law $\#$ on
the set of smooth elements of $\A$ under the action and then a completion is taken in a suitable $C^*$-norm. The outcome is a new
$C^*$-algebra $\AA$, also endowed with an action of the vector space $\Xi$. The corresponding subspaces of smooth vectors under
the two actions, $\Ai$ and $\AAi$, respectively, coincide. In \cite{Rie1} the functorial properties of the correspondence
$\A\mapsto\AA$ are studied in detail and many examples are given. It is also shown that one gets a strict deformation quantization
of a natural Poisson structure defined on $\Ai$ by the couple $(\Th,J)$.

Assuming $J$ non-degenerate (so it defines a symplectic form on $\Xi$), one gets a twisted action
$(\Th,\kappa)$ of $\Xi$ on the $C^*$-algebra $\A$, where
$\kappa$ is the $2$-cocycle on $\Xi$ given by $(X,Y)\mapsto \kappa(X,Y):=\exp(iX\cdot JY)$. To such a twisted action one
associates canonically \cite{PR1,PR2} a twisted crossed product $C^*$-algebra $\A\rtimes_\Th^\kappa\Xi$, whose representations are
determined by covariant representations of the quadruplet $(\A,\Th,\kappa,\Xi)$.

In the present article we are going to show that the two $C^*$-algebras $\AA$ and $\A\rtimes_\Th^\kappa\Xi$ that can be
constructed from the data $(\A,\Th,\kappa,\Xi)$ are actually stably isomorphic. This happens in a particularly precise way: one has
an isomorphism (called the canonical mapping) $M:\mathscr K\otimes\AA\rightarrow\A\rtimes_\Th^\kappa\Xi$,
where $\mathscr K$ is an elementary $C^*$-algebra, i.e. it is faithfully represented
as the ideal of all compact operators in a separable Hilbert space. The mapping $M$
is naturally defined first between convenient Fr\'echet subalgebras
(vector-valued Schwartz spaces); the extension to a $C^*$-isomorphism needs a non-trivial isometry argument.

Such stable isomorphism has standard consequences \cite{RW}:
the (closed, bi-sided self-adjoint) ideals of the two algebras $\AA$ and $\A\rtimes_\Th^\kappa\Xi$ are in
one-to-one correspondence, the spaces of primitive ideals are homeomorphic and the two representation theories are identical. By
using basic information about the twisted crossed product, we also get a simple proof of the known fact
\cite{Rie3,Kp} that the $K$-groups of the Rieffel deformed algebra $\AA$ are the same as those of the initial algebra $\A$.
A covariant morphism $\mathcal R:(\A^1,\Th^1)\rightarrow(\A^2,\Th^2)$ can be raised both to a morphism
$\mathfrak R:\AA^1\rightarrow\AA^2$ and to a morphism
$\mathcal R^\rtimes:\A^1\rtimes_{\Th^1}^\kappa\Xi\rightarrow\A^2\rtimes_{\Th^2}^\kappa\Xi$.
The canonical mappings $M^1,M^2$ have the intertweening property $\mathcal R^\rtimes\circ M^1=M^2\circ({\rm id}\otimes\mathfrak R)$.

When the initial algebra $\A$
is commutative, it is associated by Gelfand theory with a locally compact topological dynamical system $(\Sigma,\Th,\Xi)$.
Under some assumptions on this system, one can get information on the primitive ideal space of the $C^*$-algebra $\AA$.
A choice of an invariant measure on $\Si$ leads to $L^2$-orthogonality relations for the canonical mapping $M$.
We hope to continue to investigate the canonical mappings in the commutative case
(the one closest in spirit with traditional pseudodifferential theory), having in view a more detailed study of representations,
modulation spaces and applications to spectral analysis \cite{Ma}.

\section{Involutive algebras associated to a twisted $C^*$-dynamical system}\label{sectra}

We shall recall briefly, in a slightly particular setting, some constructions and results concerning twisted crossed products
algebras and Rieffel's pseudodifferential calculus.

The common starting point is a $2n$-dimensional real vector space $\Xi$ endowed with a symplectic form $[\![\cdot,\cdot]\!]$. When
needed we are going to suppose that $\Xi=\X\times\X^*$, with $\X^*$ the dual of the $n$-dimensional vector space $\X$, and that for
$X:=(x,\xi),\,Y:=(y,\eta)\in\Xi$, the symplectic form reads $[\![X,Y]\!]:=y\cdot\xi-x\cdot \eta$.

An action $\Th$ of $\Xi$ by automorphisms of a (maybe non-commutative) $C^*$-algebra $\A$ is also given. For
$(f,X)\in\A\times\Xi\,$ we are going to use the notations $\Theta(f,X)=\Th_X(f)=\Th_f(X)\in\A$ for the $X$-transform of
the element $f$. This action is assumed strongly continuous, i.e. for any $f\in\A$ the mapping $\Xi\ni X\mapsto\Th_X(f)\in\A$ is
continuous. The initial object, containing {\it the classical data}, is a quadruplet
$\left(\A,\Th,\Xi,[\![\cdot,\cdot,]\!]\right)$ with the properties defined above.

To arrive at twisted crossed products, we define
\begin{equation}\label{caf}
\kappa:\Xi\times\Xi\rightarrow\mathbb T:=\{\lambda\in\mathbb C\mid
|\lambda|=1\}\,, \ \ \ \ \
\kappa(X,Y):=\exp\left(-\frac{i}{2}\,[\![X,Y]\!]\right)
\end{equation}
and notice that it is a group $2$-cocycle, i.e. for all
$X,Y,Z\in\Xi$ one has
$$
\kappa(X,Y)\,\kappa(X+Y,Z)=\kappa(Y,Z)\,\kappa(X,Y+Z)\,,\ \ \ \ \
\kappa(X,0)=1=\kappa(0,X)\,.
$$
Thus the classical data is converted into $\(\A,\Th,\Xi,\kappa\)$, a very particular case of {\it  twisted $C^*$-dynamical system}
\cite{PR1,PR2}.
To any twisted $C^*$-dynamical system one associates canonically a $C^*$-algebra $\A\rtimes_\Th^\kappa\Xi$ (called {\it twisted
crossed product}). This is the enveloping $C^*$-algebra of the Banach $^*$-algebra
$\(L^1(\Xi;\A),\diamond,^\diamond,\parallel\cdot\parallel_1\)$,
where
\begin{equation*}\label{aca}
\parallel G\parallel_1:=\int_\Xi dX\parallel G(X)\parallel_\A,\ \ \ \ \ G^\diamond(X):=G\(-X\)^*
\end{equation*}
and (symmetrized version of the standard form, cf. Remark \ref{alta})
\begin{equation}\label{ucu}
(G_1\diamond G_2)(X):=\int_\Xi
dY\,\kappa(X,Y)\,\Th_{(Y-X)/2}\[G_1(Y)\]\,\Th_{Y/2}\left[G_2(X-Y)\right].
\end{equation}
In \cite{PR1,PR2} $\A$ is supposed separable; since our cocycle is explicit and very simple, this will not be needed here.

We turn now to {\it Rieffel deformation} \cite{Rie1,Rie2}. Let us denote by $\Ai$ the family of elements $f$ such that the mapping
$\,\Xi\ni X\mapsto\Th_X(f)\in\mathcal A\,$ is $C^\infty$. It is a dense $^*$-algebra of $\A$ and also a Fr\'echet algebra with the
family of semi-norms
\begin{equation}\label{semicar}
|f|_\A^k:=\sum_{|\alpha|=
k}\frac{1}{\alpha!}\parallel\partial_X^\alpha\left[\Th_X(f)\right]_{X=0}\parallel_\A\
\equiv\sum_{|\alpha|= k}\frac{1}{\alpha!}\parallel\delta^\alpha (f)\parallel_\A \
, \ \ \ \ k\in\N\, .
\end{equation}
To quantize the above structure, one keeps the involution but introduce on $\Ai$ the product
\begin{equation}\label{rodact}
f\,\#\,g:=2^{2n}\int_\Xi\int_\Xi
dYdZ\,e^{2i[\![Y,Z]\!]}\,\Th_Y(f)\,\Th_Z(g)\ ,
\end{equation}
suitably defined by oscillatory integral techniques. Thus one gets a $^*$-algebra $(\Ai,\#\,,^*)$, which admits a $C^*$-completion
$\AA$ in a $C^*$-norm $\parallel\cdot\parallel_\AA$ defined by Hilbert module techniques; we are going to call $\AA$ {\it the
R-deformation of $\A$}. The action $\Th$ leaves $\Ai$ invariant and extends to a strongly continuous action on the $C^*$-algebra
$\AA$, that will also be denoted by $\Th$. The space $\AAi$ of $C^\infty$-vectors coincide with $\Ai$, even topologically, i.e.
the family (\ref{semicar}) on $\Ai=\AAi$ is equivalent to the family of semi-norms
\begin{equation}\label{semon}
|f|_\AA^k:=\sum_{|\alpha|=
k}\frac{1}{\alpha!}\parallel\partial_X^\alpha\left[\Th_X(f)\right]_{X=0}\parallel_\AA\
\equiv\sum_{|\alpha|= k}\frac{1}{\alpha!}\parallel\delta^\alpha (f)\parallel_\AA
\ ,\ \ \ \ \ k\in\N\ .
\end{equation}

An important particular case is obtained when $\A$ is the $C^*$-algebra $BC_{{\rm u}}(\Xi)$ of bounded uniformly continuous
functions on the group $\Xi$, which is invariant under translations, i.e. if $a\in\A$ and $X\in\Xi$, then $\left[\mathcal
T_X(a)\right](\cdot):=a(\cdot-X)\in\A$. Notice that the $^*$-algebra of smooth vectors coincides with $BC^\infty(\Xi)$,
the space of all smooth complex functions on $\Xi$ with bounded derivatives of every order. In this case Rieffel's construction,
done for $\Th=\mathcal T$, reproduces essentially the standard Weyl calculus; we are going to use the special notations $\sharp$
(instead of $\#$) for the corresponding composition law and $\mathscr B(\Xi)$ for the R-deformation of $BC_{{\rm u}}(\Xi)$.

One can also consider $C^*$-subalgebras $\A$ of $BC_{{\rm u}}(\Xi)$ that are invariant under translations. An important one
is $C_0(\Xi)$, formed of all the complex continuous functions on $\Xi$ that decay at infinity. Its Rieffel deformation will be
denoted by $\mathscr K (\Xi)$; it contains the Schwartz space $\S(\Xi)$ densely. By Example 10.1 and Proposition 5.2 in
\cite{Rie1} it is elementary, i.e isomorphic to the $C^*$-algebra of all compact operators in a separable Hilbert space.

Following \cite{Rie1}, we introduce the Fr\'echet space $\S(\Xi;\AAi)$ composed of smooth functions
$F:\Xi\rightarrow\Ai=\AAi$ with derivatives that decay rapidly with respect to all $|\cdot|_\AA^k$\,. The relevant seminorms on
the space $\S(\Xi;\AAi)$ are $\{\parallel\cdot\parallel^{k,\beta,N}_\AA\,\mid\,k,N\in\N,\ \beta\in\N^{2n}\}$ where
\begin{equation}\label{cic}
\parallel F\parallel^{k,\beta,N}_\AA :=\underset{{X\in\, \Xi}}{\sup}
\{(1+|X|)^N |(\partial^\beta F)(X)|^k_\AA\},
\end{equation}
and the index $\AA$ can be replaced by $\A$, by the argument above. We are going to use repeatedly the identification of
$\S\left(\Xi;\AAi\right)$ with the topological tensor product $\S(\Xi)\hat\otimes\,\AAi$ (recall that the Fr\'echet space
$\S(\Xi)$ is nuclear). On it (and on many other larger spaces) one can define obvious actions $\mathfrak T:=\mathcal T\otimes 1$ and
$\mathcal T\otimes\Th$ of the vector spaces $\Xi$ and $\Xi\times\Xi$, respectively. Explicitly, for all $A,Y,X\in\Xi$,
one sets $\[\mathfrak T_A(F)\](X):=F(X-A)$ and $\[\(\mathcal
T_A\otimes\Th_Y\)F\](X):=\Th_Y\[F(X-A)\]$. Then on $\S\left(\Xi;\AAi\right)$ one can introduce the composition law
\begin{equation}\label{argrur}
\(F_1\square F_2\)(X)=2^{2n}\int_\Xi\int_\Xi
dAdB\,e^{-2i[\![A,B]\!]}\,\[\mathfrak T_A(F_1)\](X)\#\,
\[\mathfrak T_B(F_2)\](X)=
\end{equation}
\begin{equation}\label{palma}
=2^{4n}\int_\Xi\int_\Xi\int_\Xi\int_\Xi dA\,dB\,dYdZ\,e^{-2i[\![A,B]\!]}\,e^{2i[\![Y,Z]\!]}
\end{equation}
$$
\quad\quad\[\left(\mathcal T_A\otimes\Th_Y\right)(F_1)\](X)\,\[\left(\mathcal T_B\otimes\Th_Z\right)(F_2)\](X)\,.
$$

Notice that the last expression should be interpreted as an oscillatory integral \cite{Rie1} and that it involves the
multiplication in the $C^*$-algebra $\A$. If the involution is given by $F^\square(X):=F(X)^*,\ \forall\,X\in \Xi$, it can be
shown that one gets a Fr\'echet $^*$-algebra.

\begin{Remark}\label{ciuhat}
{\rm We recall that $\Ai=\AAi$, even topologically, but the algebraic structures are different. When the forthcoming arguments will
involve the composition $\#$\,, in order to be more suggestive, we will use the notation $\S\left(\Xi;\AAi\right)$. In other
situations the notation $\S\left(\Xi;\Ai\right)$ will be more natural. For instance, it is easy to check that
$\S\left(\Xi;\Ai\right)$ is a (dense) $^*$-subalgebra of the Banach $^*$-algebra
$\(L^1(\Xi;\A),\diamond,^\diamond,\parallel\cdot\parallel_1\)$, which is defined in terms of the product $\cdot$ on $\A$ and has a
priori nothing to do with the composition law $\#$\,. Proposition \ref{bezu} is a good illustration for this distinction.}
\end{Remark}

\begin{Remark}\label{uhu}
{\rm One can also consider $BC_{\rm u}\left(\Xi;\AA\right)$, the $C^*$-algebra of all bounded and uniformly continuous
functions $F:\Xi\rightarrow\AA$. Rieffel deformation can also be applied to the new classical data
$\left(BC_{\rm u}\left(\Xi;\AA\right), \mathfrak T,\Xi,-[\![\cdot,
\cdot]\!]\right)$, getting essentially (\ref{argrur}) as the corresponding composition law.
By using the second part (\ref{palma}) of the formula, this can also be regarded as the Rieffel composition constructed from
the extended twisted $C^*$-dynamical system $\left(BC_{\rm u}\left(\Xi;\A\right),\,\mathcal
T\otimes\Th,\,\Xi\times\Xi,\,\overline\kappa\otimes\kappa\right)\,$.
This will not be needed in this form. But we are going to use below the fact that for elements $f,g\in\AAi$, $a,b\in\S(\Xi)$ one
has $(a\otimes f)\square\,(b\otimes g)=(b\,\sharp\,a)\otimes(f\#g)\,$, so $\square$ can be seen as the tensor product between $\#$
and the law opposite to $\sharp$. By Proposition 2.1 in \cite{Rie2}, one can identify $\mathscr K(\Xi)\otimes\AA$ with the
R-deformation of $C_0(\Xi)\otimes\A\equiv C_0(\Xi;\A)$ and $\mathscr B(\Xi)\otimes\AA$ with the R-deformation of $BC_{{\rm
u}}(\Xi)\otimes\A$\,.}
\end{Remark}

\section{The Schr\" odinger representation}\label{grid}

We are going to denote by $\mathbb B(\mathcal M,\mathcal N)$ the space of all linear continuous
operators acting between the topological vector spaces $\mathcal M$ and $\mathcal N$ and use the abbreviation $\mathbb B(\mathcal
M)$ for $\mathbb B(\mathcal M,\mathcal M)$.

Let us recall that ${\Cal X}$ is a finite-dimensional vector space. The corresponding \emph{Heisenberg algebra} ${\mathfrak h}_{{\Cal
X}}={\Cal X}\times{\Cal X}^*\times{\mathbb R}$ is the Lie algebra with the bracket
$$
[(x,\xi,t),(y, \eta,s)]:=(0,0,y \cdot \xi-x \cdot \eta)\,.
$$
We use notations as $\bar X=(x, \xi, t)$ and $X= (x, \xi)$. The \emph{Heisenberg group} ${\mathbb H}_{{\Cal X}}$ is just
${\mathfrak h}_{{\Cal X}}$ thought of as a group with the multiplication~$\ast$ defined by
$$
{\bar X}\ast {\bar Y}={\bar X}+{\bar Y}+\frac{1}{2}[{\bar X},{\bar Y}], \quad {\bar X},{\bar Y}\in {\mathbb H}_{{\Cal X}}.
$$
The unit element is $0\in{\mathbb H}_{{\Cal X}}$ and the inversion mapping given by ${\bar X}^{-1}:=-{\bar X}$.

\textit{The Schr\"odinger representation} is the unitary representation $\Pi\colon{\mathbb H}_{{\Cal
X}}\to{\mathbb B}(\mathcal L)$ in the Hilbert space $\mathcal L:=L^2({\Cal X})$, defined by
\begin{equation}\label{sch}
\left[\Pi (\bar X) u\right](y)=[\Pi(x, \xi,t)u](y)={\rm e}^{{\rm
i}(y \cdot \xi+\frac{1}{2}x \cdot \xi+t)}u(y+x) \quad\text{ for
a.e. $y \in {\Cal X}$}
\end{equation}
for arbitrary $u\in L^2({\Cal X})$ and $\bar X=(x,\xi,t)\in{\mathbb H}_{{\Cal X}}$. When restricted to $\Xi=
{\Cal X}\times {\Cal X}^*$ (which is not a subgroup and should be regarded as a quotient of ${\mathbb H}_{{\Cal X}}$),
$\Pi$ becomes a projective representation that will be denoted by $\pi$: it satisfies
$$
\pi(X)\,\pi(Y)=\kappa(X,Y)\,\pi(X+Y),\ \ \forall X,Y\in\Xi\,.
$$

{\it The Wigner distributions  defined by} $\pi$ are given by
$$
{\Cal W}(u,v) := \mathcal F(\scalar{u}{\pi(\cdot) v}, \quad u,v\in \mathcal L.
$$
We used {\it the symplectic Fourier transform}
$$
(\mathcal F a)(X):=\int_\Xi dY e^{-i[\![X,Y]\!]}a(Y)
$$
and forced it to be $L^2$-unitary and satisfy $\mathcal F^2={\rm id}$, by a suitable choice of Lebesgue measure $dY$ on $\Xi$.
Recall that ${\Cal W}(u,v)\in {\Cal S}(\Xi)$ when $u$, $v \in {\Cal S}(\Cal X)$, ${\Cal W}\colon \mathcal L\times \mathcal L \to
L^2(\Xi)$ is an isometry and extends to a unitary mapping ${\Cal W}\colon L^2(\Cal X)\otimes \overline{L^2(\Cal X)}\to L^2(\Xi)$.
{\it The Weyl pseudodifferential calculus} is then a linear isomorphism
 \begin{equation}\label{mirtan}
\opn{Op} \colon {\Cal S}'(\Xi)\to \mathbb B[{\Cal S}(\Cal X), {\Cal S}'(\Cal X)],\ \quad
\scalar{v}{\opn{Op}(a)u} = \scalar{\overline{ {\Cal W}(v,u)}}{a}.
\end{equation}
Recall  also that $\opn{Op} [{\Cal W}(u,v)]= \<\cdot\mid v\> u$ for all $u,v\in \mathcal L$, and $\opn{Op}\colon L^2(\Xi)\to {\gk
S}_2(\mathcal L)$ (Hilbert-Schmidt operators) is unitary. For $a$, $b \in {\Cal S}'(\Xi)$, $a\,\sharp\,b$ is the symbol of the
operator $\opn{Op}(a)\opn{Op}(b)$ whenever this is well-defined and continuous from ${\Cal S}(\Cal X)$ to ${\Cal S}'(\Cal X)$. Of
course, the symbol $\sharp$ is an extension of the one used in the previous section. The action of $\opn{Op}(a)$ on $\S(\X)$ or
$\mathcal L:=L^2(\X)$ (under various assumptions on the symbol $a$ and with various interpretations) is given by
\begin{equation}\label{eil}
\left[\opn{Op}(a)v\right](x):=\int_\X
dy\int_{\X^*}d\xi\,e^{i(x-y)\cdot\xi}\,a\left(\frac{x+y}{2},
\xi\right)v(y).
\end{equation}

Consider next the space of operators
$$
{\bb B}_{\text{u}} (\mathcal L) = \{ T\in {\bb B} (\mathcal L) \mid \Xi \ni X\to \pi( X)  T\pi (-X) \in {\bb B} (\mathcal L) \;
\text{is norm continuous}\}.
$$
Then ${\bb B}_{\text{u}} (\mathcal L)$ is a proper $C^*$-subalgebra of ${\bb B} (\mathcal L)$ with the norm given by
the operator norm and involution given by Hilbert space adjoint, and it contains the ideal ${\bb K}(\mathcal L)$
of compact operators on $\mathcal L$ (see \cite[Thm. 1.1]{Cor79}). The representation
$$
\pi \otimes {\bar \pi}\colon \Xi \to {\bb B}\left[\mathbb
B_\text{u}(\mathcal L)\right], \quad (\pi \otimes {\bar \pi}) (X)
T= \pi (X) T\pi(-X)
$$
is then strongly continuous. Let ${\bb B}_{\text{u}}^\infty(\mathcal L)$ be the  space of smooth vectors
for this representation. Then   ${\bb B}_{\text{u}}^\infty(\mathcal L)$ is dense in
 ${\bb B}_{\text{u}}(\mathcal L)$ \cite[Thm.1.1]{Cor79}, and consists precisely of those Weyl pseudo-differential operators
 with symbols in $BC^\infty(\Xi)$ (\cite[Thm.1.2]{Cor79} and \cite[Thm. 2.3.7]{Fo}).

\begin{Lemma}\label{precisely}
The Weyl calculus $\opn{Op}$ realizes an isomorphism between $\Cal B(\Xi)$ (the R-deformation of $BC_{{\rm u}}(\Xi)$) and $\mathbb
B_{{\rm u}}(\mathcal L)$. The image through $\opn{Op}$ of $\Cal K(\Xi)$ is precisely $\mathbb K(\mathcal L)$.
\end{Lemma}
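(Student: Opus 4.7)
\emph{Plan.} The strategy is to assemble three ingredients already prepared in the excerpt and close with a norm comparison via essential ideals.

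First, I would observe that on the common Fréchet $^*$-algebra of smooth vectors $BC^\infty(\Xi)$ the Weyl calculus already is a bijective $^*$-algebra morphism onto $\mathbb B_{\rm u}^\infty(\mathcal L)$: the bijection and identification of the range are exactly Cordes' Theorem~1.2 together with Folland's Theorem~2.3.7, both cited just before the lemma. The morphism property is the definitional identities $\opn{Op}(a\,\sharp\,b)=\opn{Op}(a)\opn{Op}(b)$ and $\opn{Op}(\bar a)=\opn{Op}(a)^*$, which hold on $BC^\infty(\Xi)$ because the product $\sharp$ used in the construction of $\Cal B(\Xi)$ (the R-deformation of $BC_{\rm u}(\Xi)$ under translations) is by design the Weyl composition of symbols, and the involution is complex conjugation in both settings.

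Second, I would upgrade this to a $C^*$-identification $\opn{Op}\colon\Cal K(\Xi)\to\mathbb K(\mathcal L)$. Rieffel's Example 10.1 and Proposition 5.2 (already invoked) tell us that $\Cal K(\Xi)$ is elementary; to see that the implementing isomorphism is exactly the Weyl calculus, I would use $\opn{Op}[\mathcal W(u,v)]=\<\cdot\mid v\>u$ together with the fact that $\mathcal W\colon\mathcal S(\X)\otimes\mathcal S(\X)\to\mathcal S(\Xi)$ has dense range: consequently $\opn{Op}\bigl(\mathcal S(\Xi)\bigr)$ contains, and in fact equals the linear span of, rank-one operators $u\<\cdot\mid v\>$ with $u,v\in\mathcal S(\X)$, which is dense in $\mathbb K(\mathcal L)$. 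Combined with the $L^2$-unitarity $\opn{Op}\colon L^2(\Xi)\to\mathfrak S_2(\mathcal L)$ and the $^*$-algebra morphism property already verified on $\mathcal S(\Xi)$, this yields the $C^*$-isomorphism $\Cal K(\Xi)\cong\mathbb K(\mathcal L)$.

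Third, for the passage from the smooth vectors to the full algebras, I would establish the isometry $\|a\|_{\Cal B}=\|\opn{Op}(a)\|_{\rm op}$ for $a\in BC^\infty(\Xi)$ by an essential-ideal argument. Since $C_0(\Xi)$ is an essential ideal of $BC_{\rm u}(\Xi)$, the functoriality of Rieffel deformation (Proposition 5.2 in \cite{Rie1}) makes $\Cal K(\Xi)$ an essential ideal of $\Cal B(\Xi)$; on the other side $\mathbb K(\mathcal L)$ is essential in $\mathbb B_{\rm u}(\mathcal L)\subset\mathbb B(\mathcal L)$. Therefore
$$
\|a\|_{\Cal B}=\sup\bigl\{\,\|a\,\sharp\,b\|_{\Cal K}\;:\;b\in\Cal K(\Xi),\ \|b\|_{\Cal K}\le 1\,\bigr\},
$$
which, transported by the isomorphism of the previous step, equals $\sup\{\|\opn{Op}(a)T\|_{\rm op}:T\in\mathbb K(\mathcal L),\ \|T\|\le 1\}=\|\opn{Op}(a)\|_{\rm op}$. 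Density of $BC^\infty(\Xi)$ in $\Cal B(\Xi)$ (as smooth vectors) and of $\mathbb B_{\rm u}^\infty(\mathcal L)$ in $\mathbb B_{\rm u}(\mathcal L)$ (Cordes) then allow $\opn{Op}$ to extend uniquely to the asserted $C^*$-isomorphism $\Cal B(\Xi)\to\mathbb B_{\rm u}(\mathcal L)$, and its restriction to $\Cal K(\Xi)$ is precisely the isomorphism onto $\mathbb K(\mathcal L)$. The principal obstacle is the essentiality of $\Cal K(\Xi)$ inside $\Cal B(\Xi)$, which must be extracted cleanly from Rieffel's functorial machinery rather than checked by hand; once that is in place, the rest of the argument is assembly.
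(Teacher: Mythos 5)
Your route is genuinely different from the paper's, and it contains two gaps that both trace back to the same missing ingredient: you never connect the abstract Rieffel $C^*$-norm on $BC^\infty(\Xi)$ (or on $\mathcal S(\Xi)$) to anything computable. In your second step, knowing that $\opn{Op}[\mathcal S(\Xi)]$ is dense in $\mathbb K(\mathcal L)$ and that $\mathcal K(\Xi)$ is abstractly elementary does not yield the isometry $\|a\|_{\mathcal K(\Xi)}=\|\opn{Op}(a)\|_{\mathbb B(\mathcal L)}$ on $\mathcal S(\Xi)$: a dense $^*$-subalgebra may carry more than one $C^*$-norm, and even when the two completions happen to be abstractly isomorphic the identity map on the subalgebra need not extend. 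The $L^2$-unitarity of $\opn{Op}$ onto $\mathfrak{S}_2(\mathcal L)$ that you invoke compares the $L^2$-norm with the Hilbert--Schmidt norm, not the Rieffel norm with the operator norm, so it does not close this. In your third step you yourself flag the essentiality of $\mathcal K(\Xi)$ inside $\mathcal B(\Xi)$ as unproven; without it, the map $\mathcal B(\Xi)\to M(\mathcal K(\Xi))$ could a priori have a kernel and your sup formula only gives $\|\opn{Op}(a)\|\le\|a\|_{\mathcal B(\Xi)}$, not equality. So the argument is incomplete at both of its load-bearing points.

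The paper closes both gaps at once with \cite[Prop.~4.15]{Rie1}: for the translation action, the Rieffel norm of $a\in BC^\infty(\Xi)$ \emph{is} the operator norm of $L_a\colon b\mapsto a\,\sharp\,b$ on $\mathcal S(\Xi)$ equipped with the $L^2(\Xi)$-norm. Then $a\,\sharp\,\mathcal W(u,v)=\mathcal W(\opn{Op}(a)u,v)$ together with $\|\mathcal W(u,v)\|_{L^2(\Xi)}=\|u\|\,\|v\|$ gives $\|L_a\|\ge\|\opn{Op}(a)\|$, while $\|L_a(b)\|_{L^2}=\|\opn{Op}(a)\opn{Op}(b)\|_{\mathfrak{S}_2}\le\|\opn{Op}(a)\|\,\|b\|_{L^2}$ gives the reverse inequality; no essential-ideal bootstrap is needed, and the statement about $\mathcal K(\Xi)$ then follows by restricting the isometry to the dense subspace $\mathcal S(\Xi)$, whose image lies in and is dense in $\mathbb K(\mathcal L)$. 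If you want to keep your architecture, you would still need this computation (or an argument via the matrix-unit relations of the Wigner functions $\mathcal W(u_i,u_j)$ for an orthonormal basis, which force uniqueness of the $C^*$-norm on their span) to obtain the isometry on $\mathcal K(\Xi)$, and in addition an actual proof that $\mathcal K(\Xi)$ is essential in $\mathcal B(\Xi)$ --- something Rieffel's functoriality of invariant ideals does not hand you for free.
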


\begin{proof}
Indeed, recall that when $a\in BC^\infty(\Xi)$, the norm $\parallel a\parallel_{\Cal B(\Xi)}$ of $a$ in the Rieffel algebra
is given by the norm of the operator $L_a\colon {\S}(\Xi) \to {\S}(\Xi)$, $L_a(b) = a\# b$, where on ${\S}(\Xi)$ one considers
the $L^2$-norm (a particular case of \cite[Prop. 4.15]{Rie1}). Taking $b= \Cal
{W}(u,v)$, with $u,v \in {\S}(\Cal X)$, one gets that $a\#  \Cal {W}(u,v)= {\Cal W}(\opn{Op}(a)u,v)$, hence
$$
\norm{L_a[\Cal {W}(u,v)]}_{L^2(\Xi)}= \norm{v}\,
\norm{\opn{Op}(a)u}, \quad u,v\in {\S}(\Cal X).
$$
Thus $\norm{\opn{Op}(a)}_{\mathbb B(\mathcal L)}\le \norm{a}_{\Cal B(\Xi)}$. On the other hand, denoting by
$\parallel\cdot\parallel_{{\gk S}_2(\mathcal L)}$ the Hilbert-Schmidt norm, one has
$$
\begin{aligned}
   \normrum{L_a(b)}{L^2(\Xi)} & =\normrum{\opn{Op}(a\#b)}{{\gk S}_2(\mathcal L)}
\\
&  \le \normrum{\opn{Op}(a)}{\mathbb B(\mathcal L)}
 \,\normrum{\opn{Op}(b)}{{\gk S}_2(\mathcal L)}\\
& =\normrum{\opn{Op}(a)}{\mathbb B(\mathcal
L)}\,\normrum{b}{L^2(\Xi)},
\end{aligned}
$$
hence  $\norm{\opn{Op}(a)}_{\mathbb B(\mathcal L)}\ge \norm{a}_{\Cal B(\Xi)}$. It follows that the norm of the operator
$L_a$ is in fact equal to the norm of $\opn{Op}(a)$ in $\mathbb B(\mathcal L)$. The Rieffel algebra $\mathscr B(\Xi)$ is the
closure of $BC^\infty(\Xi)$ in the norm $a\to \norm{a}_{\mathscr B(\Xi)}=\norm{L_a}_{\mathbb B[L^2(\Xi)]}$, hence
it is isomorphic to $\mathbb B_{{\rm
u}}(\mathcal L)$, the closure of $\mathbb B_{\rm u}^\infty(\mathcal L)=\opn{Op}[BC^\infty(\Xi)]$, as stated.

Now the last statement of the Lemma is trivial if we recall that $\opn{Op}[\S(\Xi)]\subset\mathbb K(\mathcal L)$.
\end{proof}

\section{The canonical mappings}\label{glmoma}

\begin{Definition}\label{glomod}
On $\S\left(\Xi;\AAi\right)$ we introduce {\rm the canonical mappings}
\begin{equation}\label{somorfismv}
[M(F)](X):=\int_\Xi dY\,e^{-i[\![X,Y]\!]}\,\Th_Y\[F\(Y\)\]
\end{equation}
and
\begin{equation}\label{adjuv}
\[M^{-1}(G)\](X):=\int_\Xi dY\,e^{-i[\![X,Y]\!]}\,\Th_{-X}\[G(Y)\]\,.
\end{equation}
\end{Definition}

To give a precise meaning to these relations, use {\it the (symplectic) partial Fourier transform}
\begin{equation*}
\mathfrak F\equiv\mathcal F\otimes
1:\S(\Xi;\AAi)\rightarrow\S(\Xi;\Ai),\ \ \ \ \ (\mathfrak F
F)(X):=\int_\Xi dY e^{-i[\![X,Y]\!]}F(Y)\,.
\end{equation*}
Defining also $C$ by $\[C(F)\](X):=\Th_{X}\[F(X)\]$, we have $M=\mathfrak F\circ C$ and $M^{-1}=C^{-1}\circ\mathfrak F$.

\begin{Proposition}\label{bezu}
The mapping
$M:\left(\S\left(\Xi;\AAi\right),\square\,,\,^\square\,\right)\rightarrow\left(\S\left(\Xi;\Ai\right), \diamond\,,\,^\diamond\,\right)$
is an isomorphism of Fr\'echet $^*$-algebras and $M^{-1}$ is its inverse.
\end{Proposition}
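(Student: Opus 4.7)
The plan is to factor $M = \mathfrak{F}\circ C$ with $[C(F)](X):=\Theta_X[F(X)]$ and exploit this decomposition throughout. The operator $C$ is a continuous bijection of $\S(\Xi;\Ai)$ with continuous inverse $[C^{-1}(G)](X)=\Theta_{-X}[G(X)]$ with respect to the seminorms (\ref{cic}), continuity being guaranteed by the smoothness and strong continuity of the action on $\Ai$. The partial symplectic Fourier transform $\mathfrak{F}=\mathcal F\otimes 1$ is a Fréchet automorphism of $\S(\Xi)\,\hat\otimes\,\Ai\cong \S(\Xi;\Ai)$, since $\S(\Xi)$ is nuclear and $\mathcal F^2=\mathrm{id}$. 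Consequently $M$ is a Fréchet-space isomorphism and its inverse is $C^{-1}\circ\mathfrak{F}$, which is exactly formula (\ref{adjuv}). Compatibility with the involutions reduces to a single line,
\begin{equation*}
M(F)^\diamond(X) = M(F)(-X)^* = \Bigl(\int_\Xi dY\, e^{i[\![X,Y]\!]}\Theta_Y[F(Y)]\Bigr)^{*} = \int_\Xi dY\, e^{-i[\![X,Y]\!]}\Theta_Y\bigl[F(Y)^*\bigr] = M(F^\square)(X),
\end{equation*}
using that each $\Theta_Y$ is a $^*$-automorphism.

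The genuine content of the proposition is the multiplicative identity $M(F_1\,\square\,F_2)=M(F_1)\diamond M(F_2)$. The plan is to verify it first on elementary tensors $F_1=a\otimes f$, $F_2=b\otimes g$ with $a,b\in\S(\Xi)$ and $f,g\in\AAi$, and then extend by the joint continuity of $\square$, $\diamond$ and the continuity of $M$, combined with the density of $\S(\Xi)\otimes\AAi$ in $\S(\Xi;\AAi)\cong\S(\Xi)\,\hat\otimes\,\AAi$. By Remark \ref{uhu} the left-hand side equals $M\bigl((b\,\sharp\,a)\otimes(f\,\#\,g)\bigr)$, which unfolds, via the integral definitions of $\sharp$ and $\#$ together with the fact that $\Theta_Y$ is an automorphism of $(\Ai,\#)$, into an oscillatory integral over five variables. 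On the right-hand side, I plug the formula for $M$ into (\ref{ucu}) and perform the change of variables $U'=U+(Y-X)/2$, $V'=V+Y/2$ to consolidate the $\Theta$-arguments in the form $\Theta_{U'}(f)\Theta_{V'}(g)$. After a further substitution in the integrations arising from the Weyl factor $b\,\sharp\,a$ (replacing $R,S$ by $Y-R,\,Y-S$) and a careful regrouping of the quadratic phase, the $Y$-integration becomes a symplectic Fourier inversion delivering the required delta distribution, and the resulting integral coincides with the one coming from the left-hand side.

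The main obstacle is the phase bookkeeping in this last step: the factor $\kappa(X,Y)=e^{-i[\![X,Y]\!]/2}$ from $\diamond$ must combine with the two symplectic kernels coming from $M(F_1)$ and $M(F_2)$, after the various changes of variables, to reproduce exactly the kernels $e^{2i[\![R,S]\!]}$ and $e^{2i[\![P,Q]\!]}$ needed to reassemble $b\,\sharp\,a$ and $f\,\#\,g$. The required changes of variables are linear and symplectically natural, so the algebra goes through, but checking that it does term by term is tedious. The analytical issues are not problematic: Rieffel's oscillatory-integral techniques from \cite{Rie1} (for example, Gaussian regularization followed by a limit) legitimize all the Fubini-type interchanges and integrations by parts required in the argument, so the difficulty is purely algebraic.
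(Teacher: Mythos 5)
Your structural reductions are all correct and coincide with the paper's: the factorization $M=\mathfrak F\circ C$, the argument that $C$ and $\mathfrak F$ are reciprocal Fr\'echet isomorphisms (so that $M^{-1}=C^{-1}\circ\mathfrak F$ is indeed (\ref{adjuv})), and the one-line involution computation are exactly what the paper does. Where you diverge is on multiplicativity: the paper proves $M^{-1}\left[M(F)\diamond M(G)\right]=F\,\square\,G$ for \emph{general} $F,G\in\S\left(\Xi;\AAi\right)$ by a single explicit iterated-integral computation, ending with the substitutions $Y=Y_3+\tfrac{1}{2}(Y_2-Y_1)-X$, $Z=Y_4+\tfrac{1}{2}Y_2-X$ and then $Y_3=X-A$, $Y_4=X-B$, which lands directly on formula (\ref{palma}). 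Your plan --- verify the identity on elementary tensors $a\otimes f$, $b\otimes g$ via $(a\otimes f)\,\square\,(b\otimes g)=(b\,\sharp\,a)\otimes(f\#g)$ from Remark \ref{uhu}, then extend by density --- is legitimate in principle (the continuity needed for the extension is best routed through the $L^1$-norm on the target, since $\diamond$ is a priori only continuous for $\parallel\cdot\parallel_1$), but it is not obviously lighter: you trade one five-variable oscillatory integral for another, now with two separate deformed products ($\sharp$ in opposite order, and $\#$) to unravel.

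The difficulty is that you never actually carry out that computation, and it is the only genuinely substantive point of the proposition. Everything hinges on the claim that the phase $\kappa(X,Y)$ from (\ref{ucu}), the two kernels $e^{-i[\![\cdot,\cdot]\!]}$ coming from the two copies of $M$, and the shifts $\Th_{(Y-X)/2}$, $\Th_{Y/2}$ recombine, after your changes of variables, into exactly $e^{-2i[\![A,B]\!]}\,e^{2i[\![Y,Z]\!]}$ with the correct arguments. You correctly identify this as ``the main obstacle'' and then dispose of it with ``the algebra goes through.'' That is an assertion, not a proof: this is precisely the step where a sign or a factor of $2$ could fail, and where the specific symmetrized form (\ref{ucu}) of $\diamond$ (as opposed to the traditional form $\diamond'$ of Remark \ref{alta}, for which the stated $M$ is \emph{not} multiplicative --- one needs $M'$ instead) is essential. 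To complete the argument you must display the phase computation, at least for elementary tensors, and confirm that the $Y$-integration really does implement the symplectic Fourier inversion you invoke. As written, the proof has a genuine gap exactly where the content lies.
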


\begin{proof}
The partial Fourier transform is an isomorphism. One also checks that $C$ is an isomorphism of $\S\left(\Xi;\AAi\right)$; this
follows from the explicit form of the seminorms on $\mathcal S(\Xi;\,\AA^\infty)$, from the fact that
$\Th_X$ is isometric and from the formula
$$
\partial^\beta\[\Th_X (F(X))\]=\underset{\gamma\leq\beta}{\sum} C_{\beta\gamma}\Th_X\{\delta^\gamma
\[(\partial^{\beta-\gamma}F)(X)\]\}\,.
$$
With this remarks we conclude that $M=\mathfrak F\circ C$ and $M^{-1}=C^{-1}\circ\mathfrak F$ are reciprocal topological linear
isomorphisms.

We still need to show that $M$ is a $^*$-morphism. {\it For the involution}:
$$
\begin{aligned}
\[M(F)\]^\diamond(X) & =\left\{\int_\Xi dY\,e^{i[\![X,Y]\!]}\Th_Y\[F\(Y\)\]\right\}^* \\
& =\int_\Xi dY\,e^{-i[\![X,Y]\!]}\Th_Y\[F\(Y\)^*\,\] \\
& =\left[M\left(F^\square\right)\right](X)\,.
\end{aligned}
$$
{\it For the product}: it is enough to show that $M^{-1}\[M(F)\diamond M(G)\]=F\square\,G$ for all $F,G\in\S(\Xi;\AAi)$.
One has (iterated integrals):
$$
\begin{aligned}
&(M^{-1}\[MF\diamond MG\])(X)=\int_\Xi dY_1\,e^{-i[\![X,Y_1]\!]}\,
\Th_{-X}\left\{\[MF\diamond MG\]\(Y_1\)\right\} \\
& =\!\int_\Xi\!dY_1\,e^{-i[\![X,Y_1]\!]}\,\Th_{-X}\!\left\{\int_\Xi\!
dY_2\,e^{-\frac{i}{2}\![\![Y_1,Y_2]\!]}\Th_{(Y_2-Y_1)/2}\[(MF)(Y_2)\]
\Th_{Y_2/2}\!\[(MG)(Y_1-Y_2)\]\right\} \\
& =\!\int_\Xi\!dY_1\!\int_\Xi\!dY_2\,e^{-i[\![X,Y_1]\!]}\,e^{-\frac{i}{2}[\![Y_1,Y_2]\!]}\,\Th_{-X}\!\left\{
\Th_{(Y_2-Y_1)/2}\[(MF)(Y_2)\]\Th_{Y_2/2}\[(MG)(Y_1-Y_2)\]\right\} \\
& =\!\int_\Xi\!dY_1\int_\Xi dY_2\,e^{-i[\![X,Y_1]\!]}\,e^{-\frac{i}{2}[\![Y_1,Y_2]\!]}\,\cdot\Th_{(Y_2-Y_1)/2-X}\left\{\int_\Xi
dY_3\,e^{-i[\![Y_2,Y_3]\!]}\,\Th_{Y_3}\[F\(Y_3\)\]\right\} \\
& \quad\quad\quad\ \cdot
\Th_{Y_2/2-X}\left\{\int_\Xi dY_4\,e^{-i[\![Y_1-Y_2,Y_4]\!]}\,\Th_{Y_4}\[G\(Y_4\)\]\right\} \\
& =\int_\Xi\!dY_1\int_\Xi\!dY_2\int_\Xi\!dY_3\int_\Xi\!
dY_4\,e^{-i[\![X,Y_1]\!]}\,e^{-\frac{i}{2}[\![Y_1,Y_2]\!]}
\,e^{-i[\![Y_2,Y_3]\!]}e^{-i[\![Y_1-Y_2,Y_4]\!]}\,\cdot \\
& \quad\quad\quad\ \cdot\Th_{Y_3+(Y_2-Y_1)/2-X}\[F\(Y_3\)\]\Th_{Y_4+Y_2/2-X}\[G\(Y_4\]\) \\
& =2^{4n}\!\int_\Xi\!dY\!\int_\Xi\!dZ\!\int_\Xi\!dY_3\int_\Xi
dY_4\,e^{-2i[\![X,Y_3-Y_4]\!]}\,e^{2i[\![Y,Z]\!]}\,e^{-2i[\![Y_3,Y_4]\!]}\,
\Th_{Y}\[F\(Y_3\)\]\Th_{Z}\[G\(Y_4\)\]\,.
\end{aligned}
$$
For the last equality we made the substitution $\,Y=Y_3+\frac{1}{2}(Y_2-Y_1)-X,\ Z=Y_4+\frac{1}{2}Y_2-X\,$.
Finally, setting $\ Y_3=X-A,\ Y_4=X-B\,$, we get
$$
\(M^{-1}\[MF\diamond MG\]\)(X)=[F\square G](X)=
$$
$$
=2^{4n}\int_\Xi dY\int_\Xi dZ\int_\Xi dA\int_\Xi dB\,e^{-2i[\![A,B]\!]}\,e^{2i[\![Y,Z]\!]}\,
\Th_{Y}\[F\(X-A\)\]\Th_{Z}\[G\(X-B\)\]\,.
$$
\end{proof}

\begin{Remark}\label{alta}
{\rm Let us make some comments about how one could modify the definitions above. We are going to need the notation
$\left[C_\alpha(F)\right](X):=\Theta_{\alpha X}[F(X)]$, where $X\in\Xi,\,F\in\S(\Xi;\Ai)$ (or $F\in L^1(\Xi;\A)$) and $\alpha$
is a real number. All these operations are isomorphisms and our previous transformation $C$ coincides with $C_1$.
The traditional composition law in the twisted crossed product is not (\ref{ucu}), but
\begin{equation*}\label{oco}
(G_1\diamond' G_2)(X):=\int_\Xi
dY\,\kappa(X,Y)\,G_1(Y)\,\Th_{Y}\left[G_2(X-Y)\right].
\end{equation*}
The distinction is mainly an ordering matter and it corresponds to the distinction between the Weyl and the Kohn-Nirenberg forms of
pseudodifferential theory. Applying $C_{1/2}$ leads to an isomorphism between the two algebraic structures. So, if we want
to land in this second realization, we should replace $M=\mathfrak F\,C_1$ with $M':=C_{1/2}\,\mathfrak F\,C_1$, leading explicitly to
$$
[M'(F)](X):=\int_\Xi dY\,e^{-i[\![X,Y]\!]}\,\Th_{Y+X/2}\[F\(Y\)\]\,.
$$
}
\end{Remark}

\section{The $C^*$-algebraic isomorphism}\label{indoc}

We recall that $\Cal K(\Xi)$, with multiplication $\sharp$, has been defined as the R-deformation of the commutative $C^*$-algebra
$C_0(\Xi)$ on which $\Xi$ acts by translations. Then $\Cal K(\Xi)$ is an elementary (hence nuclear) $C^*$-subalgebra of $\Cal
B(\Xi)$, and $\Cal S(\Xi)$ is dense in $\Cal K(\Xi)$. The Fr\'echet $^*$-algebra
$\S(\Xi;\AAi)\equiv\S(\Xi)\hat\otimes\,\AAi$ with the composition law $\square$ given in (\ref{palma}) is dense in the $C^*$-algebra
$\mathscr K(\Xi)\otimes\AA$, that can be viewed (see Remark \ref{uhu} and \cite[Prop.2.1]{Rie2}) as the R-deformation of
$C_0(\Xi)\otimes\A$ with respect to the action of $\Xi\times\Xi$ composed of translations in the first variable and the initial
action $\Th$ in the second.

This section is mainly dedicated to the proof of the next result:

\begin{Theorem}\label{stric}
The mapping $M$ extends to a $C^*$-isomorphism $:\Cal
K(\Xi)\otimes\AA\rightarrow \A\rtimes_\Th^\kappa\Xi$.
\end{Theorem}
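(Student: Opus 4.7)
The strategy is to show that $M$ and $M^{-1}$ each extend by continuity to contractive $*$-homomorphisms of the $C^*$-completions. By Proposition~\ref{bezu}, $M$ is already a Fr\'echet $*$-algebra isomorphism between the dense $*$-subalgebras $\S(\Xi;\AAi)\subset\mathcal K(\Xi)\otimes\AA$ (density from Remark~\ref{uhu}) and $\S(\Xi;\Ai)\subset\A\rtimes_\Th^\kappa\Xi$. Since any $*$-homomorphism of $C^*$-algebras is automatically contractive, two mutually inverse bounded $*$-algebra maps on dense $*$-subalgebras force each extension to be an isometric $C^*$-isomorphism, so it suffices to produce the two norm bounds.

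For one half of the norm comparison, my plan is to realize both algebras faithfully on a common Hilbert space. Fix a faithful non-degenerate $*$-representation $\rho\colon\AA\to\mathbb B(\H)$. By Lemma~\ref{precisely} together with the nuclearity of $\mathbb K(\mathcal L)\cong\mathcal K(\Xi)$, the external tensor product $\opn{Op}\otimes\rho$ is a faithful representation of $\mathcal K(\Xi)\otimes\AA$ on $\mathcal L\otimes\H$, so that $\|F\|_{\mathcal K(\Xi)\otimes\AA}=\|(\opn{Op}\otimes\rho)(F)\|$. On the same space I build a covariant representation $(\Sigma,\Pi)$ of $(\A,\Th,\kappa,\Xi)$: set $\Pi(X):=\pi(X)\otimes I_\H$, which is a $\kappa$-projective unitary representation of $\Xi$ by the defining property of the Schr\"odinger representation, and, for $a\in\Ai$,
\[
\Sigma(a):=(\opn{Op}\otimes\rho)\bigl(Y\mapsto\Th_{-Y}(a)\bigr),
\]
the Weyl pseudodifferential operator with operator-valued $BC^\infty$-symbol $Y\mapsto\Th_{-Y}(a)$. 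The covariance relation $\Pi(X)\Sigma(a)\Pi(X)^{-1}=\Sigma(\Th_X(a))$ then reduces to the standard transformation law $\pi(X)\opn{Op}(b)\pi(X)^{-1}=\opn{Op}(b(\,\cdot-X))$ for Schr\"odinger conjugation of Weyl symbols.

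The core of the argument is the intertwining identity
\[
(\Sigma\times\Pi)\bigl(M(F)\bigr)=(\opn{Op}\otimes\rho)(F),\qquad F\in\S(\Xi;\AAi),
\]
where $\Sigma\times\Pi$ denotes the integrated form of the covariant pair. Unfolding both sides with the explicit formulas (\ref{sch}), (\ref{eil}), and (\ref{somorfismv}), and collapsing the resulting oscillatory integrals against the Fourier kernel $e^{-i[\![X,Y]\!]}$, yields the equality. Combined with the general bound $\|(\Sigma\times\Pi)(M(F))\|\le\|M(F)\|_{\A\rtimes_\Th^\kappa\Xi}$ valid for any covariant pair, this gives $\|F\|_{\mathcal K(\Xi)\otimes\AA}\le\|M(F)\|_{\A\rtimes_\Th^\kappa\Xi}$, i.e.\ $M^{-1}$ extends to a contraction.

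The main obstacle I anticipate is the reverse bound $\|M(F)\|_{\A\rtimes_\Th^\kappa\Xi}\le\|F\|_{\mathcal K(\Xi)\otimes\AA}$. The plan is to run the previous argument symmetrically: starting from an arbitrary non-degenerate covariant representation $(\sigma,V)$ of $(\A,\Th,\kappa,\Xi)$ on a Hilbert space $\H'$, produce a Rieffel-type Weyl calculus $\opn{Op}_V\colon\AA\to\mathbb B(\H')$ by replacing $\pi$ with $V$ in a functional-calculus formula, tensor with $\opn{Op}$ to obtain a representation of $\mathcal K(\Xi)\otimes\AA$ on $\mathcal L\otimes\H'$, and run the intertwining identity in reverse so that $\|(\sigma\times V)(M(F))\|\le\|F\|_{\mathcal K(\Xi)\otimes\AA}$; taking the supremum over $(\sigma,V)$, which realizes the $C^*$-norm of the twisted crossed product by its universal property, yields the claim. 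The delicate point is the verification that $\opn{Op}_V$ is a genuine $*$-representation of $\AA$ for any such $(\sigma,V)$, which ultimately reduces to a group-theoretic computation that only uses the $\kappa$-projective nature of $V$ and the covariance of $\sigma$ — not any feature peculiar to the Schr\"odinger representation $\pi$.
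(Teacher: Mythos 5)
Your proposal replaces the paper's argument by a representation-theoretic one: realize both $C^*$-norms via explicit representations on a common Hilbert space and compare them through an intertwining identity. The paper instead proves that $\S(\Xi)\hat\otimes\,\AAi$ satisfies a Blackadar--Cuntz differential-seminorm condition inside $\Cal K(\Xi)\otimes\AA$ (by exhibiting it as the smooth vectors of the isometric action $\Omega$ built from the Schr\"odinger representation of the Heisenberg group), whence the injective $^*$-morphism $M$ of Proposition \ref{bezu} is \emph{automatically} isometric; no covariant representations are needed. Your route is legitimate in principle, but as written it has two genuine gaps.

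First, the pair $(\Sigma,\Pi)$ is not shown to be covariant for $(\A,\Th,\kappa,\Xi)$: you verify the covariance relation but never the multiplicativity $\Sigma(ab)=\Sigma(a)\Sigma(b)$ for the \emph{undeformed} product of $\A$, and with your formula this fails. Composing the operator-valued Weyl quantizations of $Y\mapsto\rho(\Th_{-Y}(a))$ and $Y\mapsto\rho(\Th_{-Y}(b))$ produces the symbol $Y\mapsto\rho\bigl(\Th_{-Y}(2^{2n}\!\int\!\!\int dA\,dB\,e^{2i[\![A,B]\!]}\Th_A(a)\#\Th_B(b))\bigr)$, i.e.\ a \emph{further} Rieffel deformation of $\#$ by the same $J$, not the original product $a\cdot b$ (to undeform one needs the opposite phase; this is exactly the ``opposite to $\sharp$'' twist recorded in Remark \ref{uhu}). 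So either the formula must be corrected (flipping a sign or composing $\opn{Op}$ with a transpose) and the multiplicativity proved by an oscillatory-integral computation of the same weight as Proposition \ref{bezu}, or the representation you obtain is of the wrong algebra. Second, and more seriously, the reverse inequality rests on producing, from an \emph{arbitrary} covariant pair $(\sigma,V)$, a $^*$-representation $\opn{Op}_V$ of $\AA$ satisfying $\|\opn{Op}_V(f)\|\le\|f\|_{\AA}$. Even granting that $\opn{Op}_V$ is multiplicative for $\#$ on $\Ai$, a $^*$-morphism defined only on a dense $^*$-subalgebra of a $C^*$-algebra need not be bounded for the ambient $C^*$-norm (this is precisely the obstruction the paper's Blackadar--Cuntz lemma is designed to overcome), and the Rieffel norm $\|\cdot\|_{\AA}$ is defined abstractly by Hilbert-module techniques, so the bound is not a ``group-theoretic computation'': it is essentially the content of the theorem. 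To make your scheme work you would either have to import the bijection between covariant representations of the system and representations of $\AA$ from elsewhere, or invoke a smooth-subalgebra rigidity argument for $\Ai\subset\AA$ --- at which point you are back to the paper's method.
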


The following definition (see \cite[Def.1.2]{Schw} and the concept of {\it differential seminorm} in
\cite[Def.3.1]{BC91}) isolates a situation in which any injective
morphism between a dense $^*$-subalgebra of a $C^*$-algebra and another $C^*$-algebra can be extended to a $C^*$-algebraic monomorphism.

\begin{Definition}\label{B-C}
Let $\mathscr F$ be a dense Fr\'echet subalgebra of a $C^*$-algebra ${\mathscr C}$. We say that \emph{${\mathscr F}$
satisfies the Blackadar-Cuntz condition in ${\mathscr C}$} if the topology on ${\mathscr F}$ is given by a family of seminorms
$\{p_k\}_{k\ge 0}$ such that $p_0$ is the $C^*$-norm giving the topology on ${\mathscr C}$ and
$$
p_k(ab) \le \sum\limits_{i+j=k} p_i(a) p_j(b), \quad a, b\in
{\mathscr F}.
$$
\end{Definition}

Tracing back through \cite{BC91}, one realizes that if $\mathscr F$ satisfies the Blackadar-Cuntz condition in $\mathscr
C$, it is a smooth algebra in the sense of \cite[Def.6.6]{BC91}.
Actually the more general concept of {\it derived seminorm} \cite[Def.5.1]{BC91}
involved in the definition of a smooth algebra is meant to model quotients of differential seminorms. Therefore the following result is in
fact a particular case of \cite[Prop.6.8]{BC91}:

\begin{Proposition}\label{isometry}
Assume $\mathscr F$ is a dense Fr\'echet subalgebra of a $C^*$-algebra ${\mathscr C}$ and satisfies the Blackadar - Cuntz
condition in $\mathscr C$. Then if ${\mathscr D}$ is another $C^*$-algebra and $\Phi\colon {\mathscr F} \mapsto {\mathscr D}$
is an injective $^*$-morphism, then $\Phi$ is isometric for the $C^*$-norm on ${\mathscr C}$.
\end{Proposition}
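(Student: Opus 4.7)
The plan is to unpack the Blackadar--Cuntz hypothesis into two spectral consequences and then combine them with injectivity to force isometry, by extending $\Phi$ to a $C^*$-morphism $\tilde\Phi:\mathscr C\to\mathscr D$. First I would establish that $\mathscr F$ is \emph{spectrally invariant} in $\mathscr C$, i.e.\ $\mathrm{sp}_{\mathscr F}(a)=\mathrm{sp}_{\mathscr C}(a)$ for every $a\in\mathscr F$ (unitizing when needed). The proof is a Neumann-series argument: iterating the differential inequality $p_k(xy)\le\sum_{i+j=k}p_i(x)p_j(y)$ yields polynomial-in-$n$ bounds on $p_k(a^n)$ in terms of $p_0(a)^{n-k}$, so whenever $p_0(a)<1$ the geometric series $\sum_n a^n$ converges in every seminorm $p_k$ and represents $(1-a)^{-1}$ inside $\mathscr F$; a standard resolvent/localization trick then removes the smallness hypothesis. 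This is precisely the "smooth subalgebra" property of \cite[Def.~6.6]{BC91}.

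Granted spectral invariance, contractivity of $\Phi$ is automatic. Any algebraic $^*$-morphism preserves invertibility, so for a self-adjoint $a\in\mathscr F$ one has $\mathrm{sp}_{\mathscr D}(\Phi(a))\subseteq\mathrm{sp}_{\mathscr F}(a)=\mathrm{sp}_{\mathscr C}(a)$, and taking spectral radii gives $\|\Phi(a)\|_{\mathscr D}\le\|a\|_{\mathscr C}$. The $C^*$-identity $\|\Phi(a)\|_{\mathscr D}^2=\|\Phi(a^*a)\|_{\mathscr D}\le\|a^*a\|_{\mathscr C}=\|a\|_{\mathscr C}^2$ promotes this bound to arbitrary $a\in\mathscr F$, and therefore $\Phi$ extends by continuity to a $^*$-morphism $\tilde\Phi:\mathscr C\to\mathscr D$ between $C^*$-algebras.

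The remaining and most delicate step is to show that $\tilde\Phi$ is injective, because once this is done the standard theorem that an injective $^*$-morphism between $C^*$-algebras is isometric completes the proof. The kernel $I:=\ker\tilde\Phi$ is a closed two-sided ideal of $\mathscr C$ whose intersection with $\mathscr F$ is exactly $\ker\Phi=\{0\}$. The main obstacle is thus to deduce $I=\{0\}$ from $\mathscr F\cap I=\{0\}$, and this is where the full Blackadar--Cuntz machinery enters: one must establish that $\mathscr F\cap I$ is norm-dense in $I$ for \emph{every} closed ideal $I\subseteq\mathscr C$. This density is proved in \cite[Prop.~6.8]{BC91} by verifying that the image of $\mathscr F$ in the quotient $\mathscr C/I$ inherits a differential-seminorm structure and is therefore itself a smooth subalgebra, so that no positive element of $\mathscr C/I$ can be missed by $\mathscr F$. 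Once density is in hand, $\mathscr F\cap I=\{0\}$ forces $I=\{0\}$, $\tilde\Phi$ is injective, and $\Phi$ is isometric for the $C^*$-norm of $\mathscr C$.
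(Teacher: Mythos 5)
Your argument is correct, and in substance it is the argument the paper relies on: the paper offers no proof of its own, merely observing that the Blackadar--Cuntz condition makes $\mathscr F$ a smooth subalgebra in the sense of \cite[Def.~6.6]{BC91} and then invoking \cite[Prop.~6.8]{BC91}, so what you have written is essentially a reconstruction of what that citation contains. Your chain (iterated differential inequality $\Rightarrow$ Neumann series converging in every $p_k$ $\Rightarrow$ spectral invariance $\Rightarrow$ contractivity on self-adjoint elements via spectral radii, upgraded by the $C^*$-identity $\Rightarrow$ continuous extension $\tilde\Phi$ $\Rightarrow$ injectivity of $\tilde\Phi$ $\Rightarrow$ isometry) is the standard route and each step is sound. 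The one place where your sketch is thin is the justification of the key lemma that $\mathscr F\cap I$ is dense in $I$ for every closed ideal $I$ of $\mathscr C$: density of the image of $\mathscr F$ in $\mathscr C/I$ is automatic from density of $\mathscr F$ in $\mathscr C$ and does not by itself yield this. The usual proof uses instead the stability of $\mathscr F$ under smooth functional calculus of self-adjoint elements (again a consequence of the differential seminorms, via spectral invariance and polynomial bounds on $p_k(e^{ita})$): given $0\le x\in I$ and a self-adjoint $a\in\mathscr F$ with $\Vert a-x\Vert<\varepsilon$, the image of $a$ in $\mathscr C/I$ has norm $<\varepsilon$, hence $f(a)\in\mathscr F\cap I$ for any smooth $f$ vanishing on $[-\varepsilon,\varepsilon]$, and choosing $f$ with $\vert f(t)-t\vert\le 2\varepsilon$ makes $f(a)$ a good approximant of $x$ (and a nonzero element of $\ker\Phi$ if $x\neq 0$, which is the contradiction you need). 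With that supplement --- or simply taking \cite[Prop.~6.8]{BC91} as a black box, as the paper does --- your proof is complete.
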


We now prove Theorem \ref{stric}.

\begin{proof}
The algebra $\S(\Xi)\hat{\otimes}\,\AA^\infty$ is a dense subalgebra of $\Cal K(\Xi) \otimes\,\AA$.
As mentioned before, it can be identified to
$\S\left(\Xi;\AAi\right)$. Proposition \ref{bezu} gives an injective $^*$-morphism
$M:\S\left(\Xi;\AAi\right)\rightarrow\A\rtimes_\Th^\kappa\Xi$ with dense range. If one proves that
$\S(\Xi)\hat{\otimes}\,\AA^\infty$ satisfies Blackadar-Cuntz condition in $\Cal K(\Xi)\otimes \AA$,
Proposition \ref{isometry} shows that $M$ is isometric
for the $C^*$-norm on $\Cal K(\Xi) \otimes \AA$, so it extends to an isomorphism
$:\Cal K(\Xi)\otimes\AA\rightarrow \A\rtimes_\Th^\kappa\Xi$.

To show the Blackadar-Cuntz condition for $\S(\Xi)\hat{\otimes}\,\AA^\infty$, we are going to express it as the space of
smooth vectors for a continuous group action in $\Cal K(\Xi) \otimes \AA$.

Using the Schr\" odinger representation (\ref{sch}) of the Heisenberg group $\mathbb H_\X$ in $\mathcal L=L^2(\X)$, we consider the strongly
continuous representation (by Banach space isomorphisms)
$$
\begin{aligned}
 & \Delta\colon {\bb H}_{\Cal X}\times {\bb H}_{\Cal X} \to \mathbb B[{\bb K}(\mathcal L)]\,,\\
& \Delta({\bar X}, {\bar Y})T = \Pi({\bar
X}) T\Pi(-{\bar Y}), \quad {\bar X}, {\bar Y}\in {\bb H}_{\Cal X}.
\end{aligned}
$$
Notice that $\mathbb K(\mathcal L)$ is an admissible ideal in $\mathbb B(\mathcal L)$, as in \cite[Def.3.8]{BB11}. Also recall
that the Weyl-Pedersen calculus for general
nilpotent Lie groups $G$, introduced in \cite{Pe} and developed in \cite{BB1,BB11,BB4}, particularizes to the usual
Weyl calculus if $G=\mathbb H_\X$ is the Heisenberg group; therefore one can use the results of these articles.
It follows from \cite[Th.4.6 and Th.3.13]{BB11}  (see also \cite{BB4}
and \cite[Th.4.1.4]{Pe}) that the space $\mathbb K(\mathcal L)^\infty$ of smooth vectors of the representation $\Delta$
is precisely $\opn{Op}[\S(\Xi)]$ and that $\opn{Op}:\S(\Xi)\rightarrow \mathbb K(\mathcal L)^\infty$ is a
topological isomorphism of Fr\'echet spaces (a restriction of the isomorphism given by Lemma \ref{precisely}). Hence
$\opn{Op}\hat\otimes\,{\rm id}:\S(\Xi)\hat{\otimes}\,\AA^\infty\rightarrow\mathbb K(\mathcal L)^\infty\hat{\otimes}\,\AA^\infty$
is also an isomorphism of Fr\'echet spaces.
Thus, to finish the proof, it will be enough to show that $\mathbb K(\mathcal L)^\infty\hat{\otimes}\,\AA^\infty$
satisfies the Blackadar-Cuntz condition in $\mathbb K(\mathcal L)\otimes\,\AA$.

We set
$$
\begin{aligned}
 & \Omega\colon {\bb H}_{\Cal X}\times {\bb H}_{\Cal X}\times\Xi \to \mathbb B[{\bb K}(\mathcal L)\otimes\AA]\,,\\
& \Omega({\bar X}, {\bar Y},Z) = \Delta(\bar X,\bar Y)\otimes\Th_Z, \quad {\bar X}, {\bar Y}\in {\bb H}_{\Cal X},\,Z\in\Xi.
\end{aligned}
$$
It is easy to check that $\Omega$ is a strongly continuous representation and that the its space of smooth vectors
$[\mathbb K(\mathcal L)\otimes\,\AA]^\infty$ coincides with the (unique) topological
tensor product $\mathbb K(\mathcal L)^\infty\hat\otimes\,\AAi$.
The transformations $\Omega({\bar X}, {\bar Y},Z)$ are isometric.

It follows that the topology of the tensor product $\mathbb K(\mathcal L)^\infty\hat{\otimes}\,\AAi$ is
also given by the countable family of seminorms
$$
p_k(\Phi):= \sum\limits_{|\alpha|+|\beta|\le k} \frac{1}{\alpha!\beta!}\left\Vert\partial_{\bar X}^{\alpha_1} \partial_{\bar
Y}^{\alpha_2} \partial_Z^\beta\left[\Omega({\bar X}, {\bar Y},Z)\Phi\right]_{\bar X=\bar Y=Z=0}\right\Vert_{\mathbb
K(\mathcal L)\otimes\AA}\,,
$$
where $\alpha=(\alpha_1, \alpha_2)\in\N^{4n+2}$ and $\beta\in\N^{2n}$. When computing on products $\Phi\circ\Psi$, one has to face
the fact that the action $\Delta$ is not automorphic on $\mathbb K(\mathcal L)$.
Note however that when $S$, $T\in {\bb K}(\mathcal L)$ we have
$$
\Delta(\bar X, \bar Y)(S T) = \left[\Delta(\bar X, 0)S\right] \left[\Delta(0, \bar Y)T\right]\,,
$$
implying for all $\Phi,\Psi\in\mathbb K(\mathcal L)\otimes\,\AA$ and all
$(\bar X,\bar Y,Z)\in{\bb H}_{\Cal X}\times{\bb H}_{\Cal X}\times\Xi$
$$
\Omega(\bar X, \bar Y,Z)(\Phi\circ\Psi) = \left[\Omega(\bar X, 0,Z)\Phi\right]\circ \left[\Omega(0, \bar Y,Z)\Psi\right]\,.
$$
Then a simple calculation shows that
$$
p_k(\Phi\circ\Psi)  \le \sum\limits_{i+j=k} p_i(\Phi) p_j(\Psi)
$$
for all $\Phi,\Psi\in \mathbb K(\mathcal L)^\infty\hat\otimes\,\AAi$.
One also has $p_0(\Phi)=\parallel \Phi\parallel_{\mathbb K(\mathcal L)\otimes\AA}\,$, so the proof is finished.
\end{proof}

\begin{Remark}\label{polonezu}
{\rm Let us consider the continuous action $\beta:\Xi\rightarrow{\rm Aut}\left(\A\rtimes_\Th^\kappa\Xi\right)$ given for $G\in
L^1(\Xi;\A)$ by
$$
\left[\beta_Z(G)\right](X):=e^{i[\![X,Z]\!]}G(X)\,,\quad X,Z\in\Xi
$$
(this is the dual action in disguise). Then a short computation gives for any $Z\in\Xi$
\begin{equation}\label{conct}
M\circ\left(\mathcal T_{-Z}\otimes\Th_Z\right)=\beta_Z\circ M\,,
\end{equation}
so actually $M$ can be seen as an isomorphism of $C^*$-dynamical systems.
Thus the twisted crossed product $\A\rtimes_\Th^\kappa\Xi$ endowed with the action $\beta$ can be seen as (an isomorphic copy of)
the Rieffel deformation of the $C^*$-algebra $C_0(\Xi;\A)$.}
\end{Remark}

\section{Applications}\label{indoiuc}

One can rephrase Theorem \ref{stric} by saying that $\A\rtimes_\Th^\kappa \Xi$ is (isomorphic to) the stable algebra
of $\AA$. In particular, $\AA$ and $\A\rtimes_\Th^\kappa \Xi$ are stably isomorphic. Therefore they have identical representation
theories (indexed by covariant representations of the system $(\A,\Th,\kappa,\Xi)$), isomorphic ideal lattices and there
are canonical homeomorphisms between the corresponding spaces of primitive ideals \cite{RW}.

\medskip
We investigate now the interplay between the canonical maps and $\Xi$-morphisms. Let $\left(\A^j,\Theta^j,\Xi,\kappa\right)$,
$j=1,2$, be two sets of classical data and let $\R:\A^1\rightarrow\A^2$ a $\Xi$-morphism, \ie\  a $C^*$-morphism intertwining the two
actions $\Theta^1,\Theta^2$. Then $\R$ acts coherently on $C^\infty$-vectors ($\,\R[\A^{1,\infty}]\subset\A^{2,\infty}\,$)
and extends to a morphism $\RR:\AA^1\rightarrow\AA^2$ of the R-quantized $C^*$-algebras that also intertwines the corresponding
actions (see \cite{Rie1}). On the other hand \cite{PR1,PR2} another $C^*$-morphism
$\R^\rtimes:\A^1\rtimes_{\Th^1}^\kappa\Xi\rightarrow\A^2\rtimes_{\Th^2}^\kappa\Xi$ is assigned canonically to $\R$, uniquely defined by
$$
\[\R^\rtimes(F)\](X):=\R\[F(X)\],\ \ \ \ \ \forall F\in L^1(\Xi;\A^1)\,.
$$

\begin{Proposition}\label{ulema}
Denoting by ${\rm id}$ the identical map on $\mathscr K(\Xi)$ and by $M^j$ the canonical map for the data $(\A^j,\Th^j,\kappa,\Xi)$,
one has
\begin{equation}\label{rarucu}
\mathcal R^\rtimes\circ M^1=M^2\circ({\rm id}\otimes\mathfrak R)\,.
\end{equation}
\end{Proposition}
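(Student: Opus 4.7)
The plan is to verify (\ref{rarucu}) first on the dense Fr\'echet $^*$-subalgebra $\S(\Xi;\AA^{1,\infty})\simeq\S(\Xi)\hat\otimes\,\AA^{1,\infty}$, and then extend to the whole $C^*$-algebra $\mathscr K(\Xi)\otimes\AA^1$ by a density-plus-continuity argument.

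By construction (cf.\ \cite{Rie1}), $\mathfrak R$ coincides with $\R$ on the common smooth subalgebra $\A^{1,\infty}=\AA^{1,\infty}$ and continues to intertwine the actions, so $\mathfrak R\circ\Th^1_Y=\Th^2_Y\circ\mathfrak R$ on $\AA^{1,\infty}$ for every $Y\in\Xi$. Fix $F\in\S(\Xi;\AA^{1,\infty})$. Then for every $X\in\Xi$, applying the definitions of $\R^\rtimes$ and $M^1$ gives
\begin{equation*}
[(\R^\rtimes\circ M^1)(F)](X)=\R\left(\int_\Xi dY\,e^{-i[\![X,Y]\!]}\,\Th^1_Y[F(Y)]\right).
\end{equation*}
Since $F$ is Schwartz-valued and $\Th^1_Y$ is isometric on $\A^1$, the integrand is absolutely Bochner-integrable in $\A^1$; continuity and linearity of $\R:\A^1\to\A^2$ then allow it to pass under the integral sign. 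Using the intertwining identity $\R\circ\Th^1_Y=\Th^2_Y\circ\mathfrak R$ on $\AA^{1,\infty}$, I would then get
\begin{equation*}
\int_\Xi dY\,e^{-i[\![X,Y]\!]}\,\Th^2_Y[\mathfrak R(F(Y))]=\left[M^2\bigl(({\rm id}\otimes\mathfrak R)(F)\bigr)\right](X),
\end{equation*}
where I also used $[({\rm id}\otimes\mathfrak R)(F)](Y)=\mathfrak R(F(Y))$. This establishes (\ref{rarucu}) on the Fr\'echet subalgebra $\S(\Xi;\AA^{1,\infty})$.

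To conclude, observe that both sides of (\ref{rarucu}) are $C^*$-morphisms from $\mathscr K(\Xi)\otimes\AA^1$ into $\A^2\rtimes_{\Th^2}^\kappa\Xi$: on the left, $M^1$ extends to a $C^*$-isomorphism by Theorem \ref{stric} and $\R^\rtimes$ is the canonical $C^*$-morphism constructed in \cite{PR1,PR2}; on the right, ${\rm id}\otimes\mathfrak R$ is a $C^*$-morphism of tensor products and $M^2$ likewise extends by Theorem \ref{stric}. Since $\S(\Xi;\AA^{1,\infty})$ is dense in $\mathscr K(\Xi)\otimes\AA^1$ and the two $C^*$-morphisms coincide there, they must agree globally. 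The only mildly delicate step is the interchange of $\R$ with the $\Xi$-integral, which is immediate from the continuity of $\R$ together with the absolute integrability of the integrand; I do not anticipate a genuine obstacle in this argument.
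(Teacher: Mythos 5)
Your proposal is correct and follows essentially the same route as the paper: a direct computation on the dense subalgebra $\S(\Xi;\AA^{1,\infty})$ using the definition of $\R^\rtimes$, the interchange of $\R$ with the $\Xi$-integral, and the intertwining relation $\R\circ\Th^1_Y=\Th^2_Y\circ\R$, with the identity then extended by density and continuity of the $C^*$-morphisms involved. The paper compresses the last step into ``it is enough to compute on $F\in\S(\Xi;\AA^{1,\infty})$'', so your explicit justification of the integral interchange and the density argument only adds detail, not a new idea.
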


\begin{proof}
It is enough to compute on $F\in\S(\Xi;\AAi)$:
$$
\begin{aligned}
\left[(\mathcal R^\rtimes\circ M^1)(F)\right](X) & =\mathcal R[(M^1F)(X)] \\
& =\mathcal R\left[\int_\Xi dY\,e^{-i[\![X,Y]\!]}\Th^1_Y(F(Y))\right] \\
& =\int_\Xi dY\,e^{-i[\![X,Y]\!]}\mathcal R\left[\Th^1_Y(F(Y))\right] \\
& =\int_\Xi dY\,e^{-i[\![X,Y]\!]}\Th^2_Y[\mathcal R(F(Y))]  \\
& =M^2[({\rm id}\otimes\mathfrak R)F](X)\,.
\end{aligned}
$$
\end{proof}

The next two results have been proved in \cite{Rie3} without asking the skew-symmetric operator $J$ to be non-degenerate
(see also \cite{Kp}). The proofs relying on Theorem \ref{stric} are very simple:

\begin{Corollary}\label{kteo}
The $C^*$-algebras $\A$ and $\AA$ have the same $K$-groups.
\end{Corollary}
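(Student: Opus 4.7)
The plan is to combine Theorem \ref{stric} with standard $K$-theoretic properties of twisted crossed products. First, Theorem \ref{stric} yields a $C^*$-algebraic isomorphism $\mathscr K(\Xi)\otimes\AA\cong\A\rtimes_\Th^\kappa\Xi$. Since $\mathscr K(\Xi)$ is an elementary $C^*$-algebra (isomorphic to the compacts on a separable Hilbert space), stability of topological $K$-theory gives
$$
K_*(\AA)\cong K_*\bigl(\mathscr K(\Xi)\otimes\AA\bigr)\cong K_*\bigl(\A\rtimes_\Th^\kappa\Xi\bigr)\,.
$$
So the problem is reduced to identifying the $K$-groups of the twisted crossed product by $\Xi\cong\mathbb R^{2n}$ with those of $\A$.

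Next, I would invoke a Connes--Thom isomorphism for twisted crossed products by simply-connected solvable Lie groups. For the group $\Xi$, which is just a vector group of even dimension $2n$, this can be reached either directly (the twisted version of Connes' theorem for $\mathbb R$, applied iteratively $2n$ times) or via the Packer--Raeburn stabilisation trick: one exhibits a Morita equivalence
$$
\A\rtimes_\Th^\kappa\Xi\,\sim_{\!M}\,(\A\otimes\mathscr K)\rtimes_{\widetilde\Th}\Xi
$$
for a suitable (untwisted) action $\widetilde\Th$ of $\Xi$, and then applies the ordinary Connes--Thom isomorphism $2n$ times. Either way one gets $K_*(\A\rtimes_\Th^\kappa\Xi)\cong K_{*+2n}(\A)$, and by Bott $2$-periodicity of topological $K$-theory this is just $K_*(\A)$. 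Combining with the first step yields $K_*(\AA)\cong K_*(\A)$.

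The one subtle point is justifying the Connes--Thom isomorphism in the twisted setting: the cocycle $\kappa(X,Y)=\exp\bigl(-\frac{i}{2}[\![X,Y]\!]\bigr)$ is smooth and bi-character-like, so either invoking the general Packer--Raeburn dilation \cite{PR1,PR2} to remove the twist, or citing the twisted Connes--Thom result directly, resolves this. Everything else is routine: stability of $K$-theory under tensoring with $\mathscr K$ is standard, and the parity of $\dim\Xi=2n$ makes the degree shift vanish. No dependence on the non-degeneracy of $J$ is really used at the $K$-theoretic level, which is consistent with the fact that the Corollary was already known in \cite{Rie3,Kp} without this assumption.
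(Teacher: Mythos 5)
Your proposal is correct and follows essentially the same route as the paper: reduce via Theorem \ref{stric} and stability of $K$-theory to the twisted crossed product, remove the twist by the Packer--Raeburn stabilisation trick, and then apply the Connes--Thom isomorphism together with the even dimension of $\Xi$ (Bott periodicity) to land back at $K_*(\A)$. The only cosmetic difference is that you spell out the degree shift $K_{*+2n}$ explicitly, which the paper leaves implicit.
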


\begin{proof}
Since $\AA$ and $\A\rtimes_\Th^\kappa \Xi$ are stably isomorphic, they have the same $K$-theory \cite{Bl}. On the other hand, by
'the stabilization trick' \cite{PR1}, $\A\rtimes_\Th^\kappa \Xi$ is stably isomorphic to a usual (untwisted) crossed product
$(\A\otimes\mathscr K)\rtimes_\Gamma \Xi$ associated to an action $\Gamma$ of $\Xi$ on the tensor product of $\A$ with an elementary
algebra $\Cal K$. The vector space $\Xi$ has even dimension, hence
by Connes' Thom isomorphism \cite{Bl} the $K$-groups of the crossed product coincides with the $K$-groups of
$\A\otimes\mathscr K$, i.e. with those of $\A$.
\end{proof}

\begin{Corollary}\label{kteo}
The $C^*$-algebras $\A$ and $\AA$ are simultaneously nuclear.
\end{Corollary}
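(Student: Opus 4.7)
The plan is to exploit Theorem \ref{stric} to transfer the question from $\AA$ to the twisted crossed product $\A\rtimes_\Th^\kappa\Xi$, then untwist and use the fact that $\Xi$, being a vector group, is amenable.

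The chain of reductions I would follow is:

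\begin{enumerate}
\item By Theorem \ref{stric}, $\mathscr K(\Xi)\otimes\AA\cong\A\rtimes_\Th^\kappa\Xi$ as $C^*$-algebras. Since $\mathscr K(\Xi)$ is an elementary (hence nuclear) $C^*$-algebra, the tensor product $\mathscr K(\Xi)\otimes\AA$ is nuclear if and only if $\AA$ is. Therefore $\AA$ is nuclear iff $\A\rtimes_\Th^\kappa\Xi$ is nuclear.
\item Apply the Packer--Raeburn stabilization trick (as already invoked in the preceding corollary): $\A\rtimes_\Th^\kappa\Xi$ is stably isomorphic to an \emph{untwisted} crossed product $(\A\otimes\mathscr K)\rtimes_\Gamma\Xi$ for some ordinary action $\Gamma$ of $\Xi$ on $\A\otimes\mathscr K$. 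Hence nuclearity of $\A\rtimes_\Th^\kappa\Xi$ is equivalent to nuclearity of $(\A\otimes\mathscr K)\rtimes_\Gamma\Xi$.
\item The vector group $\Xi$ is abelian, hence amenable; so is its dual $\hat\Xi$. For actions of amenable (even abelian) locally compact groups, nuclearity of the base algebra is equivalent to nuclearity of the crossed product. The forward implication is the classical result that $B\rtimes_\alpha G$ is nuclear whenever $B$ is nuclear and $G$ is amenable. The converse follows from Takai--Takesaki duality: $\bigl[(\A\otimes\mathscr K)\rtimes_\Gamma\Xi\bigr]\rtimes_{\hat\Gamma}\hat\Xi\cong(\A\otimes\mathscr K)\otimes\mathscr K(L^2(\Xi))$, so nuclearity of the crossed product propagates (via a second crossed product by the amenable group $\hat\Xi$) to $\A\otimes\mathscr K$, which is nuclear iff $\A$ is.
\end{enumerate}

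Combining the three equivalences yields: $\AA$ is nuclear $\iff$ $\A\rtimes_\Th^\kappa\Xi$ is nuclear $\iff$ $(\A\otimes\mathscr K)\rtimes_\Gamma\Xi$ is nuclear $\iff$ $\A\otimes\mathscr K$ is nuclear $\iff$ $\A$ is nuclear.

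There is no real obstacle here once Theorem \ref{stric} is in hand; the argument is a concatenation of standard permanence properties. The only mildly subtle point is the converse direction in step 3 (descending nuclearity from a crossed product to the base algebra), which is not formal and genuinely requires the Takai duality input. Everything else is the well-known behavior of nuclearity under tensoring with $\mathscr K$, stable isomorphism, and crossed products by amenable groups.
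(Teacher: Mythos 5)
Your proposal is correct and follows essentially the same route as the paper: reduce via Theorem \ref{stric} and stability to the twisted crossed product, untwist by the Packer--Raeburn stabilization trick, and then use that a crossed product by the (abelian, hence amenable) group $\Xi$ is nuclear iff the base algebra is, with Takai duality supplying the converse direction. The paper's own proof is just a compressed version of exactly this chain of permanence properties.
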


\begin{proof}
The argument is analogue to the previous one. One must also recall \cite[Th.15.8.2]{Bl} that nuclearity is preserved by stable isomorphism
and that the crossed product with a commutative group of a $C^*$-algebra $\mathcal B$ is nuclear iff $\mathcal B$ is nuclear (for the converse use Takai duality).
\end{proof}

If $\A$ is commutative, by Gelfand theory, it is isomorphic (and will be identified) to $C_0(\Si)$, the $C^*$-algebra of all complex
continuous functions on the locally compact space $\Si$ which converge to zero at infinity. The space $\Si$ is a homeomorphic
copy of the Gelfand spectrum of $\A$ and it is compact iff $\A$ is unital. Then the group $\Th$ of automorphisms is induced by an
action (also called $\Th$) of $\Xi$ by homeomorphisms of $\Si$. We are going to use the convention
$$
\left[\Th_X(f)\right](\si):=f\left[\Th_X(\si)\right],\ \ \ \
\forall\,\si\in\Si,\,X\in\Xi,\,f\in\A\,,
$$
as well as the notation $\Th_X(\si)=\Th(X,\si)$ for the $X$-transform of the point $\si$\,.
Let us set $\AA=:\mathfrak C(\Si)$ for the
(non-commutative) Rieffel $C^*$-algebra associated to $C_0(\Si)$ by deformation and
$\mathfrak C^\infty(\Si)$ for the space of smooth vectors under the action $\Th$.

\begin{Remark}\label{ciudat}
{\rm A rather surprising picture follows from a symmetry argument.
Using \cite[Th.6.5]{Rie1}, one gets easily an isomorphism $\Cal K(\Xi)\otimes\A\cong\AA\rtimes_\Th^{\overline\kappa}\Xi$.
The complex conjugated cocycle $\overline\kappa$ is defined by the symplectic form $-[\![\cdot,\cdot]\!]$.
One usually thinks of $\A$ as a rather simple $C^*$-algebra, giving after deformation a more complicated one $\AA$.
In the commutative case, for instance, we might be surprised that the twisted crossed product $\CC(\Si)\rtimes_\Th^{\overline\kappa}\Xi$
decomposes as $C_0(\Si;\Cal K(\Xi))$.}
\end{Remark}

\begin{Remark}\label{primitive}
{\rm Twisted crossed products with commutative $C^*$-algebras are discussed in \cite{Pa}. In some situation their primitive ideal space is understood
(as a topological space) and this can be transferred by our stable isomorphism to the level of $\CC(\Si)$.
By \cite[Ex.4.3]{Pa} for instance, if the action $\Th$ is free (all the isotropy groups are trivial), then ${\rm Prim}[\CC(\Si)]$
is homeomorphic to the quasiorbit space $Q^\Th(\Si)$. If in addition $\Th$ is minimal, $\CC(\Si)$ will be a simple $C^*$-algebra.
If $\Th$ is minimal without being free, the situation is described in \cite[Ex.4.11]{Pa}.}
\end{Remark}

\begin{Remark}\label{contrace}
{\rm Assume that $\Xi$ act freely on $\Si$. By Theorem \ref{stric} and \cite[Th.4.5]{Pa}, $\CC(\Si)$ is a continuous trace $C^*$-algebra
if and only if the action $\Th$ is proper.}
\end{Remark}

We discuss shortly orthogonality matters. On $\Si$ we pick a $\Th$-invariant measure $d\si$.
The relationship between the spaces $\S(\Xi;\A^\infty)$ and $L^2(\Xi\times\Si)$ depends on the assumptions we impose on
$(\Si,d\si)$. If $d\si$ is a finite measure, for instance, one has $\S(\Xi;\A^\infty)\subset L^2(\Xi\times\Si)$. Anyhow, the
canonical map can be defined independently on $L^2(\Xi\times\Si)$.

\begin{Proposition}\label{one}
One has {\rm the orthogonality relations} valid for $F,G\in L^2(\Si\times\Xi)$\,:
\begin{equation}\label{tion}
\<\overline{M(F)},M(G)\>_{\Xi\times\Si}=\<\overline{F},G\>_{\Xi\times\Si}\,.
\end{equation}
Thus the operator $M:L^2(\Xi\times\Si)\rightarrow L^2(\Xi\times\Si)$ is unitary.
\end{Proposition}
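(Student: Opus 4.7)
The plan is to exhibit $M$ as a composition of two manifestly unitary operators on $L^2(\Xi\times\Si)$. In the commutative setting $\A=C_0(\Si)$, I would first identify a function $F:\Xi\to\A^\infty$ with its kernel $F(Y,\si):=[F(Y)](\si)$. Under the convention $[\Th_X(f)](\si)=f[\Th_X(\si)]$ one has $\Th_Y[F(Y)](\si)=F(Y,\Th_Y\si)$, so on any convenient dense subspace (say $\S(\Xi)\hat\otimes L^2(\Si)$) the defining formula reads
\[
[M(F)](X,\si)=\int_\Xi dY\,e^{-i[\![X,Y]\!]}\,F(Y,\Th_Y\si)=[(\mathfrak F\circ C)F](X,\si),
\]
where $(CF)(Y,\si):=F(Y,\Th_Y\si)$ is substitution along orbits and $\mathfrak F:=\mathcal F\otimes\mathrm{id}$ is the partial symplectic Fourier transform in the first variable.

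Next I would check that both factors are unitary on $L^2(\Xi\times\Si)$. For $C$, the $\Th$-invariance of $d\si$ combined with Fubini yields $\|CF\|_2^2=\int dY\int d\si\,|F(Y,\Th_Y\si)|^2=\int dY\int d\si\,|F(Y,\si)|^2=\|F\|_2^2$, and the explicit inverse $(C^{-1}H)(Y,\si)=H(Y,\Th_{-Y}\si)$ confirms bijectivity, hence unitarity. For $\mathfrak F$, the Lebesgue measure on $\Xi$ has been normalized earlier precisely so that $\mathcal F$ is $L^2(\Xi)$-unitary and involutive; consequently $\mathfrak F$ is unitary on $L^2(\Xi)\hat\otimes L^2(\Si)\cong L^2(\Xi\times\Si)$.

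Combining the two, $M=\mathfrak F\circ C$ is unitary on the dense subspace and extends uniquely to a unitary on all of $L^2(\Xi\times\Si)$. The relation (\ref{tion}) is then a restatement of $\langle M(F),M(G)\rangle=\langle F,G\rangle$ in the bilinear-pairing convention, in which inserting the overline in the first slot recovers the standard Hermitian inner product $\int\overline{u}\,v\,dX d\si$.

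The main obstacle is conceptual rather than computational: the oscillatory formula defining $M$ does not converge absolutely on arbitrary $L^2$ data, so it is the factorization $M=\mathfrak F\circ C$ that legitimates regarding $M$ as a bounded operator on $L^2(\Xi\times\Si)$ in the first place. Once the factorization is verified on a convenient dense subspace, the $\Th$-invariance of $d\si$ is the single hypothesis that propagates the unitarity of $C$—and therefore of $M$—to the whole Hilbert space.
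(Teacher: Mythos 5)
Your proposal is correct and follows exactly the paper's argument: the paper's proof is precisely the observation that $M=\mathfrak F\circ C$, with $\mathfrak F$ unitary by the chosen normalization of Lebesgue measure and $C$ unitary because $d\si$ is $\Th$-invariant. You simply spell out the details (the Fubini computation for $C$ and the dense-subspace extension) that the paper leaves implicit.
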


\begin{proof}
It is enough to note that $M=\mathfrak F\circ C$ and to use the fact that $\mathfrak F$ and $C$ are isomorphisms of
$L^2(\Si\times\Xi)$ if $d\si$ is $\Th$-invariant.
\end{proof}

{\bf Acknowledgements:}

I. Belti\c t\u a is  supported by {\textit Proyecto Fondecyt No. 1085162.}

M. M\u antoiu is  supported by {\textit Proyecto Fondecyt No. 1085162} and by {\textit N\'ucleo Cientifico ICM P07-027-F
"Mathematical Theory of Quantum and Classical Magnetic Systems".}

\end{document}